\def\@tempa{:}
\DeclareRobustCommand{\qed}{%
  \ifmmode 
  \else \leavevmode\unskip\penalty9999 \hbox{}\nobreak\hfill
  \fi
  \quad\hbox{\qedsymbol}}
\newcommand{\emptysetpenbox}{\leavevmode
  \hbox to.77778em{%
  \hfil\vrule
  \vbox to.675em{\hrule width.6em\vfil\hrule}%
  \vrule\hfil}}
\newcommand{\qedsymbol}{\emptysetpenbox}
\newenvironment{proof}[1][\proofname]{\par
  \normalfont
  \topsep6\p@\@plus6\p@ \trivlist
  \item[\hskip\labelsep\itshape
    #1\@addpunct{.}]\ignorespaces
}{%
  \qed\endtrivlist
}
\newcommand{\proofname}{Proof}
\newenvironment{exemple*}[1][\examplename]{\par
  \normalfont
  \topsep6\p@\@plus6\p@ \trivlist
  \item[\hskip\labelsep\itshape\bfseries
     #1\@addpunct{.}]\ignorespaces\footnotesize
}{%
  \hfill $\clubsuit$\endtrivlist
}
\author{P. Moyal\\
Laboratoire de Math\'ematiques Appliqu\'ees de Compi\`egne\\
Universit\'e de Technologie de Compiègne\\
D\'epartement G\'enie Informatique\\
Centre de Recherches de Royallieu\\
BP 20 529\\
60 205 Compiègne Cedex\\
France}
\title{Construction of a stationary FIFO queue with impatient customers}
\newcommand\car{{\mathbf 1}}
\newcommand\N{{\mathbb N}}
\renewcommand\P{{\mathbf P}} 
\newcommand{\B}{{\mathfrak B}}
\newcommand{\A}{{\mathfrak  A}}
\newcommand{\R}{{\mathbb R}}
\newcommand{\F}{{\mathcal F}}
\newcommand{\W}{{\mathcal W}}
\newcommand\prp[1]{{\mathbf P}^0\left[#1\right]}
\newcommand\pp{{\mathbf P^0}}
\newcommand\qp{{\tilde{\mathbf P}^0}}
\newcommand{\pae}[1]{\mbox{$\lfloor \kern-1pt #1 \kern-1pt \rfloor$}}
\newcommand{\paep}[1]{\mbox{$\lceil \kern-1pt #1 \kern-1pt \rceil$}}
\def\N{{\mathbb N}}
\def\Z{{\mathbb Z}}
\def\P{{\mathbf P}}
\newcommand\procz[1]{\left(#1_t\right)_{t \in \R}}
\newcommand\suite[1]{\left\{#1_n\right\}_{n \in \N}}
\newcommand\suiten[1]{\left\{#1\right\}_{n \in \N}}
\newcommand\suitez[1]{\left\{#1_n\right\}_{n \in \Z}}
\theoremstyle{break} 
\newtheorem{theorem}{Theorem}[section]
\newtheorem{lemma}[theorem]{Lemma}
\date{}
\begin{document}
\maketitle

\begin{abstract}
In this paper, we study the stability of queues with impatient customers. 
Under general stationary ergodic assumptions, we first provide some conditions for such a queue to be regenerative (i.e. to empty a.s. an infinite number of times). In the particular case of a single server operating in First in, First out, we prove the existence (in some cases, on an enlarged 
probability space) 
of a stationary workload. This is done by studying stochastic recursions under the Palm settings, and by stochastic comparison of 
stochastic recursions. 
\end{abstract}
\emph{keywords}: Stochastic recursions; Stationary solutions; Queues with impatience; Renovative events; Enlargement of probability space.\\\\ 
\emph{subject class (2000)}: Primary : 60F17, Secondary :  60K25 ; 60B12.
\section{Introduction}
%

In this paper, we address the question of stability for queueing systems with impatient customers: the customers agree to wait for their 
service only during a limited period of time. They are discarded from the system provided that their patience ends before they could reach the service booth. 
Such models are particularly adequate to describe operating systems under sharp delay requirements: 
multimedia and time sensitive telecommunication and computer networks, on-line audio/video traffic flows, call centres or supply chains. 

We first give conditions of regenerativity, i.e., for the state zero to be recurrent for the congestion process of the system. Then, we construct explicitly a stationary state for these systems in the particular case of a single server obeying the FIFO (First In, First Out) discipline. 
To that end, we study a stochastically recursive sequence representing the workload seen by an arriving customer. This sequence 
 and its dynamics have been thoroughly studied in the GI/GI/1 case in \cite{HebBac81} and \cite{MR86d:60103}. In the G/G/1 context, 
the workload sequence is driven by a non-monotonic recursive equation (eq. (\ref{eq:recurstatFIFO})), and hence a construction of Loynes's type, using a backwards recurrence scheme, is not possible. We thus use more recent and sophisticated techniques to construct a stationary workload: (i) Borovkov's theory of renovating events (see \cite{MR52:12118}, \cite{Foss92}, \cite{BacBre02}) provides a sufficient condition for the existence and uniqueness of a finite stationary workload. Under this condition, we can thus construct a stationary loss probability $\pi$, and provide bounds for $\pi$ (eq. (\ref{eq:encadrepiB})). (ii) We prove in whole generality the existence of a stationary workload on the enriched probability space $\Omega\times\R+$ (where $\Omega$ is the Palm probability space of reference) using Anantharam and Konstantopoulos' construction (see \cite{Anan97}, \cite{Anan99}), which is based on tightness techniques. 

In both cases, 
we use the fact that the workload sequence is strongly dominated by another one, that is driven by a monotonic recursive equation (eq. (\ref{eq:recur})).  
Then the coupling of the dominating sequence with a unique stationary state (which is proven by Loynes' scheme) allows us to construct the  stationary state of the dominated sequence. 

We address as well the case, where the customers are impatient until the end of their service: they are lost provided their deadline is reached before a server could \emph{complete} their service. This case happens to be more simple, in that the workload sequence is driven by a 
monotonic and continuous recursive random mapping. Then the stability question can be handled by Loynes' scheme.

This paper is organized as follows. 
In section \ref{sec:PRELIM} we make precise our basic assumptions, and solve eq. (\ref{eq:recur}) in the stationary ergodic framework,  
 a result that will be used in the sequel. We present the queue with impatience until the beginning of service in section \ref{sec:IB}. 
In section \ref{sec:regenerIB}, we provide conditions for the regenerativity of this system. In section \ref{sec:FIFOIB}, we construct a 
stationary workload 
in the FIFO case: we provide a sufficient condition for the existence and uniqueness in \ref{subsubsec:borov}, and prove the existence of the stationary workload on an enriched probability space in \ref{subsubsec:anan}. Finally, in section \ref{sec:FIFOIE} we study the case of impatience until the end of service. 

\section{Preliminaries}
\label{sec:PRELIM}

Consider a probability space $\left(\Omega,\F,\pp\right)$, embedded with the 
measurable bijective flow $\theta$ (denote $\theta^{-1}$, its measurable 
inverse). Suppose that $\pp$ is stationary and ergodic under $\theta$, 
\emph{i.e.}  
for all $\A \in \F$, $\prp{\theta^{-1}\A}=\prp{\A}$ and all $\mathfrak A$ that is $\theta$-invariant (i.e. such that $\theta\A=\A$) is of  probability $0$ or $1$. Note that according to these axioms, all $\theta$-contracting event (such that $\prp{\A^c\cap\theta^{-1}\A}=0$) is 
of probability $0$ or $1$.  Let $\Z$, $\N$ and $\N^*$ denote the sets of integers, of non-negative integers and of positive integers, respectively. We denote for all $n \in \N$, 
$\theta^n=\theta\circ\theta\circ...\circ\theta\,\,\mbox{ and }\,\,\theta^{-n}=\theta^{-1}\circ\theta^{-1}\circ...\circ\theta^{-1},$ and say that two 
sequences of r.v. $\suite{X}$ and $\suite{Y}$ couple when there exists a $\pp$-a.s. finite rank $N$ such that they coincide for all $n \ge N$. 
We say that there is strong backwards coupling between $\suite{X}$ and $\suiten{Y\circ\theta^n}$ provided that for some $\pp$-a.s. finite $\tau$, 
$X_n\circ\theta^{-n}=Y$ for any $n \ge \tau$. We denote for any $x,y \in \R$, $x\vee y=\max(x,y)$, $x\wedge y=\min(x,y)$ and $x^+=x\vee 0.$ 

Let $\alpha$ and $\beta$ be two integrable $\R+$-valued r.v. such that $\prp{\beta>0}>0$ and denote for all $n \in \Z$, $\alpha_n=\alpha\circ\theta^n$ and $\beta_n=\beta\circ\theta^n$. 
Let $Z$ be an a.s. finite $\R+$-valued r.v., and consider the following stochastically recursive sequence. 
\[\left\{\begin{array}{ll}
Y^Z_0&=Z,\\
Y^Z_{n+1}&=\left[\max\left\{Y^Z_n,\alpha_n\right\}-\beta_n\right]^+\mbox{ for all }n \in \N.
\end{array}\right.\] 
Then, $\suite{Y^{Y_{\alpha,\beta}}}$ is a stationary version of this sequence provided that the r.v. $Y_{\alpha,\beta}$ is a solution to  
\begin{equation}
\label{eq:recur}
Y_{\alpha,\beta}\circ\theta=\left[Y_{\alpha,\beta}\vee\alpha-\beta\right]^+.
\end{equation} 
We have the following result. 
\begin{lemma}
\label{lemma:solrecur} 
There exists a unique $\pp$-a.s. finite solution $Y_{\alpha,\beta}$ of (\ref{eq:recur}), 
given by
\begin{equation}
  \label{eq:limMn}
  Y_{\alpha,\beta}:=\left[\sup_{j \in \N^*} \left(\alpha_{-j}-\sum_{i=1}^{j}\beta_{-i}\right)\right]^+.
\end{equation}
Moreover, for all $\pp$-a.s. finite and nonnegative r.v. $Z$, the sequence $\suite{Y^Z}$ couples with 
$\left\{Y_{\alpha,\beta}\circ\theta^n\right\}_{n \in \N}$, and there exists $\pp$-a.s. an infinity of indices such that $Y^Z_n=0$ if and only if 
\begin{equation}
\label{eq:regener}
\prp{Y_{\alpha,\beta}=0}>0.
\end{equation}
 
\end{lemma}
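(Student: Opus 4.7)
The strategy is a Loynes-type construction built around an explicit closed form for the forward iterates $Y^Z_n$: once both $Y^Z_n$ and $Y_{\alpha,\beta}\circ\theta^n$ are written as a max of three terms whose $Z$- (resp.\ ``past'')-dependence is isolated in a single summand $Z-T_n$ (resp.\ $L-T_n$), where $T_n:=\sum_{i=0}^{n-1}\beta_i$, all three assertions will fall out of the a.s.\ divergence $T_n\to+\infty$.

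First, I would verify that the r.v.\ in (\ref{eq:limMn}) solves (\ref{eq:recur}). Using $\alpha\ge 0$ and the identity $[y\vee\alpha-\beta]^+=\max(0,y-\beta,\alpha-\beta)$ valid for $y\ge 0$, applying $\theta$ to the right-hand side of (\ref{eq:limMn}) and reindexing the sum by $j'=j-1$ recovers exactly $[Y_{\alpha,\beta}\vee\alpha-\beta]^+$. A.s.\ finiteness of the defining supremum follows from Birkhoff's theorem for the ergodic inverse flow $\theta^{-1}$: since $\beta\ge 0$ and $\prp{\beta>0}>0$ imply $\espp{\beta}>0$, one has $\frac{1}{j}\sum_{i=1}^{j}\beta_{-i}\to\espp{\beta}>0$ a.s., while $\alpha_{-j}/j\to 0$ a.s.\ by Borel--Cantelli and $\espp{\alpha}<\infty$. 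Hence the summands of the sup tend to $-\infty$ a.s.

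Next, I would prove by induction on $n$, using $Y^Z_{n+1}=\max(0,Y^Z_n-\beta_n,\alpha_n-\beta_n)$ at each step, that
\[
Y^Z_n=\max\left(0,\ Z-T_n,\ \sup_{0\le k\le n-1}\Bigl(\alpha_k-\sum_{l=k}^{n-1}\beta_l\Bigr)\right).
\]
Applying $\theta^n$ to (\ref{eq:limMn}) and performing the substitution $k=n-j$ yields in the same way
\[
Y_{\alpha,\beta}\circ\theta^n=\max\left(0,\ L-T_n,\ \sup_{0\le k\le n-1}\Bigl(\alpha_k-\sum_{l=k}^{n-1}\beta_l\Bigr)\right),\qquad L:=\sup_{j\ge 1}\Bigl(\alpha_{-j}-\sum_{i=1}^{j}\beta_{-i}\Bigr),
\]
with $L$ a.s.\ finite by the same ergodic estimate. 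Since $T_n\uparrow+\infty$ a.s.\ while $Z$ and $L$ are a.s.\ finite, there exists a $\pp$-a.s.\ finite rank $N$ such that $Z-T_n<0$ and $L-T_n<0$ hold simultaneously for all $n\ge N$; then both formulas collapse to $\max(0,\sup_{0\le k\le n-1}(\alpha_k-\sum_{l=k}^{n-1}\beta_l))$, which is exactly the coupling $Y^Z_n=Y_{\alpha,\beta}\circ\theta^n$ for $n\ge N$.

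Uniqueness is then immediate: if $Y'$ is another $\pp$-a.s.\ finite solution, then $Y^{Y'}_n=Y'\circ\theta^n$ by stationarity, hence $Y'\circ\theta^n=Y_{\alpha,\beta}\circ\theta^n$ for $n$ large a.s.; by $\theta$-invariance of $\pp$ one has $\prp{Y'\ne Y_{\alpha,\beta}}=\prp{Y'\circ\theta^n\ne Y_{\alpha,\beta}\circ\theta^n}$ for every $n$, and this tends to $0$, forcing $Y'=Y_{\alpha,\beta}$ a.s. For the regeneration dichotomy, the coupling identifies the events $\{Y^Z_n=0\text{ i.o.}\}$ and $\{Y_{\alpha,\beta}\circ\theta^n=0\text{ i.o.}\}$ up to a $\pp$-null set; Birkhoff applied to $\car_{\{Y_{\alpha,\beta}=0\}}$ gives this event probability $1$ when $\prp{Y_{\alpha,\beta}=0}>0$ and probability $0$ otherwise. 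The main technical obstacle is the bookkeeping in the induction which cleanly separates the initial-condition dependence into the single term $Z-T_n$; once this is achieved, everything else reduces to the a.s.\ linear growth of $T_n$ and Birkhoff's theorem.
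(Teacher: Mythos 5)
Your proposal is correct, and it rests on the same pillar as the paper's proof --- the $\pp$-a.s.\ divergence of $T_n=\sum_{i=0}^{n-1}\beta_i$ supplied by Birkhoff's theorem --- but the execution is genuinely different. The paper invokes Loynes's scheme: since $x\mapsto[x\vee\alpha-\beta]^+$ is a.s.\ non-decreasing and continuous, the backward sequence $Y^0_n\circ\theta^{-n}$ increases to a solution, shown finite by Birkhoff; coupling is then obtained from the set identity $\left\{Y^Z_n\neq Y^0_n\mbox{ for all }n\right\}=\left\{Y^Z_n=Z-\sum_{i=0}^{n-1}\beta_i>0\mbox{ for all }n\right\}$, a null event. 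You bypass Loynes entirely: you verify by direct substitution that the closed form (\ref{eq:limMn}) is a fixed point, and you unroll the forward recursion into $Y^Z_n=\max\bigl(0,\,Z-T_n,\,\sup_{0\le k\le n-1}(\alpha_k-\sum_{l=k}^{n-1}\beta_l)\bigr)$, isolating the initial-condition dependence in the single term $Z-T_n$. I checked the induction, the reindexing $k=n-j$, and the identification of the $j>n$ block of the supremum with $L-T_n$; all are correct. What your route buys is an explicit coupling time, $N=\inf\{n:\,T_n>Z\vee L\}$, from which uniqueness and the $0$--$1$ dichotomy are transparent; the price is the index bookkeeping that the monotone-limit argument hides. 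Two small remarks, neither affecting validity: in the fixed-point verification you need $\beta\ge 0$ to replace $L^+-\beta$ by $L-\beta$ inside the outer maximum (the term $-\beta\le 0$ is absorbed), which is worth stating; and the ``only if'' half of the regeneration dichotomy is not really Birkhoff --- when $\prp{Y_{\alpha,\beta}=0}=0$ the event $\bigl\{Y_{\alpha,\beta}\circ\theta^n=0\mbox{ for some }n\bigr\}$ is a countable union of null sets, hence null; Birkhoff is only needed for the ``if'' half.
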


\begin{proof}
Equation (\ref{eq:recur}) can be handled by Loynes's construction 
(see \cite{Loynes62}, \cite{BacBre02}) since the mapping 
$x\mapsto\left[x\vee\alpha-\beta\right]^+$ 
is $\pp$-a.s. continuous and non-decreasing. Hence $Y_{\alpha,\beta}$ classically reads as the $\pp$-a.s. limit of Loynes's sequence, defined by 
$\suiten{Y^0_n\circ\theta^{-n}}$. It is routine to check from Birkhoff's ergodic theorem (and the fact that $\beta$ is not identically zero) 
that $Y_{\alpha,\beta}$ is $\pp$-a.s. finite. 
The coupling property follows from the fact that for all non-negative r.v. $Z$ that is $\pp$-a.s finite (and in particular, for $Z=Y_{\alpha,\beta}$), 
\begin{equation*}
\left\{Y_n^Z\neq Y_n^0 \mbox{ for all }n \in \N\right\}= \left\{Y_n^Z=Z-\sum_{i=0}^{n-1}\beta_i>0\mbox{ for all }n \in \N\right\},
\end{equation*} 
which is of probability 0 from Birkhoff's theorem. 
The last statement is a classical consequence of this coupling property under ergodic assumptions. 
\end{proof} 

\section{The model}
\label{sec:IB}
Let us consider a queue with 
impatient customers until the beginning of service G/G/s/s+G(b) (according to Barrer's notation, see \cite{MR19:779g}): 
 on the 
probability space $\left(\Omega,\F, \mathbf P\right)$, furnished with the measurable bijective flow $\procz{\theta}$ under which  
$\mathbf P$ is stationary and ergodic,  
consider the $\theta_t$-compatible point process $N$, whose points $\suitez{T}$ 
represent the 
arrivals of the customers $\suitez{C}$, with the convention that $T_0$ is the 
last arrival before time $t=0$. 
The interarrivals are denoted for all $n \in \Z$ by 
$\xi_n=T_{n+1}-T_n$.   The process $N$ is 
marked by the sequence $\suitez{\sigma}$ of non-negative r.v. representing the service durations requested by the customers. The queueing system 
has $s$ non-idling servers and is of infinite capacity. The customers are impatient until the beginning of their service, in that they leave the system if they do not reach the service 
booth before a given deadline. In other words, customer $C_n$ agrees to wait in line for a given period of time, say $D_n$ (his initial \emph{patience}) and if there is no server available during this period, he leaves the system forever at time $T_n+D_n$.  
$\suitez{D}$ is a sequence of non-negative marks of $\procz{N}$. We denote for all $t \in \R$, $\mathcal X_t$ the number of customers in 
the system (or \emph{congestion}) at $t$.   
The servers follow a non-preemptive service discipline. We therefore consider that a customer can 
not be eliminated 
anymore as soon as he enters the service booth, even if his deadline is reached during his service.   

Let us denote $\left(\Omega,\F,\pp,\theta\right)$, the Palm probability space 
of $N(\sigma,D)$, where $\theta:=\theta_{T_1}$ is the associated bijective discrete  
flow. Then, $\pp$ is stationary and ergodic under $\theta$, and the sequence $\{\xi_n,\sigma_n,D_n\}_{n\in\Z}$ is stationary in that for 
all $n$, 
$$\xi_n=\xi\circ\theta^n\,\mbox{, }\, \sigma_n=\sigma\circ\theta^n\,\mbox{ and }\,D_n=D\circ\theta^n, $$ where  
$\xi:=\xi_0$, $\sigma:=\sigma_0$ and $D:=D_0$. It is furthermore assumed that $\xi$, $\sigma$ and $D$ are $\pp$-integrable.

\section{Regenerativity}
\label{sec:regenerIB}
According to the assumptions made above, the total sojourn time of customer 
$C_n$ does not 
exceed $D_n+\sigma_n$, i.e. the sum of his initial patience and the time necessary for his service. 
On the other hand, it is at least equal to $\sigma_n\wedge D_n$, 
i.e. the time needed for him to be lost, or immediately served. Hence, 
provided  that $C_n$ entered the system before $t$ ($T_n \le t$) and even 
though he already left the system before $t$, his 
\emph{remaining maximal sojourn time} at $t$ 
(i.e. the remaining time before his  
latest possible departure time, if not already reached) is given by $\left[\sigma_n+D_n-(t-T_n)\right]^+,$ whereas his \emph{remaining minimal sojourn time} at $t$ 
(i.e. the remaining time before his earliest possible departure time, if not already reached) 
is given by $\left[\sigma_n\wedge D_n-(t-T_n)\right]^+.$  
Hence the \emph{largest remaining maximal sojourn time} 
(LRMST for short) at $t$ among all the customers entered before $t$ is given by
$$
\mathcal L_t:=\max_{n=1}^{N_t}\left[\sigma_n+D_n-(t-T_n)\right]^+$$
and the largest remaining minimal sojourn time 
(LRmST for short) at $t$, by 
$$\mathcal M_t:=\max_{n=1}^{N_t}\left[\sigma_n\wedge D_n-(t-T_n)\right]^+.$$ 
The two processes $\procz{\mathcal L}$ and $\procz{\mathcal{M}}$ evolve according to the following dynamics: they decrease 
at unit rate between arrival times, and equal the initial maximal (resp. minimal) 
sojourn time of $C_n$ at his arrival time $T_n$, provided that it is larger 
than the value of the process just before $T_n$. 
In other words, for all $n \in \Z$ and all $t \in \left[T_n,T_{n+1}\right)$ 
\begin{equation*}
\mathcal L_t=\left[\max\left\{\mathcal L_{T_n-},\sigma_n+D_n\right\}-\left(t-T_n\right)\right]^+,
\end{equation*} 
\begin{equation*}
\mathcal M_t=\left[\max\left\{\mathcal M_{T_n-},\sigma_n\wedge D_n\right\}-\left(t-T_n\right)\right]^+.
\end{equation*} 
Define for all finite nonnegative r.v. $Y$ and $Z$ and all $n \in \N$, 
$L_n^Y:=\mathcal L_{T_n-}$ and $M_n^Z:=\mathcal {M}_{T_n-}$, 
the LRMST (resp. LRmST) just before the arrival of customer $C_n$, provided 
that $\mathcal L_{T_0-}=Y$ (resp. $\mathcal M_{T_0-}=Z$). The processes $\procz{\mathcal L}$ and 
$\procz{\mathcal {M}}$ have rcll paths, hence we have the following recursive 
equations.
\begin{equation}
\label{eq:recurL}
L^Z_{n+1}=\left[\max\left\{L^Z_n,\sigma_n+D_n\right\}-\xi_n\right]^+,
\end{equation}
$$
M^Z_{n+1}=\left[\max\left\{M^Z_n,\sigma_n\wedge D_n\right\}-\xi_n\right]^+.$$ 
On $\left(\Omega,\pp\right)$ the two latter equations are of type 
(\ref{eq:recur}), hence 
Lemma \ref{lemma:solrecur} implies that for any $Y$ and $Z$, $\suite{L^Y}$ and $\suite{M^Z}$ couple  
respectively with $\suiten{Y_{\sigma+D,\xi}\circ\theta^n}$ and $\suiten{Y_{\sigma\wedge D,\xi}\circ\theta^n}$, where  
\begin{equation}
\label{eq:defL}
Y_{\sigma+D,\xi}=\left[\sup_{j \in \N^*} \left(\sigma_{-j}+D_{-j}-\sum_{i=1}^{j}\xi_{-i}\right)\right]^+,
\end{equation} 
\begin{equation}
\label{eq:defM}
Y_{\sigma\wedge D,\xi}=\left[\sup_{j \in \N^*} \left(\sigma_{-j}\wedge D_{-j}-\sum_{i=1}^{j}\xi_{-i}\right)\right]^+.
\end{equation} 

In particular, for any initial conditions $L$ and $M$, there are $\P$-a.s. an infinity of indices such that 
$L^L_n=0$ if and only if $\prp{Y_{\sigma+D,\xi}=0}>0$, and an 
infinity of indices such that $M^M_n=0$ if and only if 
$\prp{Y_{\sigma\wedge D,\xi}=0}>0$. Remarking now that for all initial 
conditions and all $t \in \R$,
$\left\{\mathcal L_t=0\right\}\subseteq\left\{\mathcal X_t=0\right\}\subseteq\left\{\mathcal M_t=0\right\},$ 
we obtain  
\begin{theorem}
\label{thm:regenerGGb}
The G/G/s/s+G(b) queue is regenerative (i.e. it empties $\pp$-a.s. an infinite number of times) if 
\begin{equation}
\label{eq:condsuffGGd}
\prp{Y_{\sigma+D,\xi}=0}>0
\end{equation}
and only if
\begin{equation}
\label{eq:condnecGGd}
\prp{Y_{\sigma\wedge D,\xi}=0}>0,
\end{equation}
where $Y_{\sigma+D,\xi}$ and $Y_{\sigma\wedge D,\xi}$ are defined respectively by (\ref{eq:defL}) and (\ref{eq:defM}).
\end{theorem}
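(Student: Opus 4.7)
The plan is to leverage the sandwich $\{\mathcal L_t=0\}\subseteq\{\mathcal X_t=0\}\subseteq\{\mathcal M_t=0\}$ recalled just before the statement, combined with Lemma \ref{lemma:solrecur} applied separately to each of the two recursions (\ref{eq:recurL}). Both are instances of (\ref{eq:recur}): the $L$-recursion corresponds to $\alpha=\sigma+D$, the $M$-recursion to $\alpha=\sigma\wedge D$, with $\beta=\xi$ in both cases. The integrability of $\sigma$, $D$ and $\xi$ together with $\xi$ being non-degenerate at zero ensures the hypotheses of Lemma \ref{lemma:solrecur} are met, and the dichotomy it provides, namely that $0$ is hit $\pp$-a.s. infinitely often along the recursion if and only if the stationary solution vanishes with positive probability, is precisely the tool required.

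For the sufficient condition, assume (\ref{eq:condsuffGGd}). Lemma \ref{lemma:solrecur} then produces $\pp$-a.s. infinitely many indices $n$ with $L_n^Y=0$ for every $\pp$-a.s. finite nonnegative starting value $Y$. Initialising the $L$-recursion with the $\pp$-a.s. finite value $Y=\mathcal L_{T_0-}$ of the actual system, the identity $L_n^Y=\mathcal L_{T_n-}$ yields $\mathcal L_{T_n-}=0$ at infinitely many $n$, and the first inclusion transfers this to $\mathcal X_{T_n-}=0$ at those indices. The queue therefore empties $\pp$-a.s. an infinite number of times.

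For the necessary condition, suppose the system is regenerative, so that there exists a $\pp$-a.s. unbounded sequence of times $t_k$ with $\mathcal X_{t_k}=0$. Between any two successive arrivals $T_n$ and $T_{n+1}$, the congestion $\mathcal X$ is non-increasing since no customer enters and only service completions or impatience losses can occur. Consequently, each $t_k\in[T_{n_k},T_{n_k+1})$ forces $\mathcal X_{T_{n_k+1}-}=0$, so $\mathcal X_{T_n-}=0$ at $\pp$-a.s. infinitely many indices $n$. The second inclusion then yields $\mathcal M_{T_n-}=0$ at the same indices, and since $(\mathcal M_{T_n-})_{n \in \N}$ obeys the $M$-recursion initialised from $\mathcal M_{T_0-}$, the contrapositive of Lemma \ref{lemma:solrecur} applied to that recursion forces (\ref{eq:condnecGGd}).

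The only delicate step is the transfer from a continuous-time emptying time to a pre-arrival instant $T_{n_k+1}-$, for which the key fact is the monotonicity of $\mathcal X$ on each interarrival interval; the remainder is a direct bookkeeping invocation of Lemma \ref{lemma:solrecur} applied to the two sandwiching recursions.
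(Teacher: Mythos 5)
Your proof is correct and follows essentially the same route as the paper: both arguments apply Lemma \ref{lemma:solrecur} to the two sandwiching recursions for $\mathcal L$ and $\mathcal M$ and then transfer zeros of the congestion process through the inclusions $\{\mathcal L_t=0\}\subseteq\{\mathcal X_t=0\}\subseteq\{\mathcal M_t=0\}$. Your extra care in passing from an arbitrary emptying time $t_k$ to the pre-arrival instant $T_{n_k+1}-$ via monotonicity of $\mathcal X$ on interarrival intervals is a detail the paper leaves implicit, but it is the same argument.
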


\section{The FIFO case}
\label{sec:FIFOIB}
Let us now consider the special case, where there is one server obeying the FIFO (\emph{First in, first out}) 
discipline.  Denote for all 
$t \in \R$, $\W_t$ the workload submitted to the server at time $t$, i.e. the 
quantity of work he still has to achieve at this time, in time unit. 
The process $\procz{\W}$ 
has rcll 
paths, and we define for all $n$, $W_n=\W_{T_n^-}$. Its value at $t$ equals 
the work brought by the customers arrived up to $t$, and who will 
eventually be served, since the other ones won't ever reach the server. 
Under the FIFO discipline, the served customers are those who find a 
workload less than their patience upon arrival. In-between arrival times, 
$\procz{\W}$ decreases at unit rate. Hence, for all $n \in \Z$ and $t\in \left[T_n,T_{n+1}\right)$
$$\W_t=\left[\W_{T_n-}+\sigma_n\car_{\left\{\W_{T_n-} \le D_n\right\}}-\left(t-T_n\right)\right]^+,$$   
whereas the workload sequence is driven by the recursive equation
\begin{equation}
\label{eq:recFIFO}
W_{n+1}=\left[W_n+\sigma_n\car_{\left\{W_n \le D_n\right\}}-\xi_n\right]^+.
\end{equation}

For all $n \in \N$ and all finite non-negative r.v. Z, let $W^Z_n$ be the workload seen by $C_n$ upon arrival, 
provided that $W^Z_0=Z$. 
In addition to the previous result of regenerativity 
(Theorem \ref{thm:regenerGGb}), we investigate in this particular case the 
existence and uniqueness of a stationary version for  
the stochastic recursion (\ref{eq:recFIFO}), i.e., of a finite r.v. $W$ such that $W^W_n=W\circ\theta^n$, $n\in\N$, which implies that
\begin{equation}
\label{eq:recurstatFIFO}
W\circ\theta=\left[W+\sigma\car_{\left\{W\le D\right\}}-\xi\right]^+.
\end{equation} 
The recursive equation (\ref{eq:recurstatFIFO}) is not monotonic in the state variable, in which case a construction 
of Loynes's type becomes fruitless. In the following sections, we propose two methods to circumvent this difficulty. 
\subsection{Sufficient condition}
\label{subsubsec:borov}
In Theorem \ref{thm:borov}, we use Borovkov's theory of renovating events to provide a sufficient condition for the existence of a solution to 
(\ref{eq:recurstatFIFO}), and for strong backwards coupling to occur.  
\begin{theorem}
\label{thm:borov}
If (\ref{eq:condsuffGGd}) holds, then (\ref{eq:recurstatFIFO}) admits a unique finite solution $W$, which is such that $Y_{\sigma\wedge D,\xi}\le W \le Y_{\sigma+D,\xi}$, 
$\pp$-a.s.. Moreover, for any initial condition $Z$ such that $Z\le Y_{\sigma+D,\xi}$, $\pp$-a.s., there is strong backwards coupling for $\suite{W^Z}$ with $\suiten{W\circ\theta^n}$. 
\end{theorem}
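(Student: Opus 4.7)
The strategy is to exploit the two-sided domination of the (non-monotonic) workload recursion (\ref{eq:recFIFO}) by the two monotonic recursions of type (\ref{eq:recur}) governing $\suite{L^Z}$ and $\suite{M^Z}$, already solved in \lemref{lemma:solrecur}, and then apply Borovkov's theory of renovating events, the hypothesis (\ref{eq:condsuffGGd}) being transferred through the upper bound.

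First, I would establish the pathwise sandwich $M^Z_n\le W^Z_n\le L^Z_n$ for all $n\in\N$ by induction on $n$. The one-step ingredient is the elementary inequality
$$
\max\bigl(x,\sigma\wedge D\bigr)\ \le\ x+\sigma\car_{\{x\le D\}}\ \le\ \max\bigl(x,\sigma+D\bigr),\qquad x\ge 0,
$$
verified by a case distinction on $x\le D$ versus $x>D$. Combining this with the non-decreasingness of $x\mapsto[\max(x,\sigma\wedge D)-\xi]^+$ and $x\mapsto[\max(x,\sigma+D)-\xi]^+$ propagates the bound from step $n$ to step $n+1$. By \lemref{lemma:solrecur} and (\ref{eq:condsuffGGd}), $\suite{L^Z}$ couples with $\suiten{Y_{\sigma+D,\xi}\circ\theta^n}$, and ergodicity of $\theta$ ensures that this stationary sequence $\pp$-a.s.\ hits $0$ infinitely often along its orbit, in both directions of time.

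Second, I would check that the vanishing events of $Y_{\sigma+D,\xi}$ along the orbit are renovating events of length $1$ for (\ref{eq:recFIFO}), uniformly in any initial condition $Z\le Y_{\sigma+D,\xi}$. Indeed, on $\{Y_{\sigma+D,\xi}\circ\theta^{-k}=0\}$, the $L$-sequence started from $Z\circ\theta^{-n}$ at time $-n$ stays beneath the stationary $L$-sequence by monotonicity, so it equals $0$ at time $-k$, and the sandwich forces $W^Z$ to vanish there too. After a reset to $0$, the subsequent iterates depend only on the driving sequence $(\sigma,D,\xi)$ along the following indices and not on $Z$. Borovkov's theorem (cf.~\cite{MR52:12118}, \cite{Foss92}, \cite{BacBre02}) then yields both the existence of a $\pp$-a.s.\ finite stationary solution $W$ of (\ref{eq:recurstatFIFO}) and the strong backwards coupling of $\suite{W^Z}$ with $\suiten{W\circ\theta^n}$ for every admissible $Z$.

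Third, the two-sided bound on $W$ itself would be obtained by applying the sandwich step with $Z=W$: by \lemref{lemma:solrecur}, $\suite{L^W}$ and $\suite{M^W}$ eventually coincide with $\suiten{Y_{\sigma+D,\xi}\circ\theta^n}$ and $\suiten{Y_{\sigma\wedge D,\xi}\circ\theta^n}$, so $Y_{\sigma\wedge D,\xi}\circ\theta^n\le W\circ\theta^n\le Y_{\sigma+D,\xi}\circ\theta^n$ a.s.\ for $n$ large, and a standard Fatou argument based on $\theta$-stationarity transports both inequalities back to $n=0$. Uniqueness follows: any two finite stationary solutions are sandwiched by the same bounds, share the same renovating events, and therefore couple and coincide $\pp$-a.s. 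The main obstacle I anticipate is the careful bookkeeping that the reset events are genuinely renovating \emph{uniformly} in $Z\le Y_{\sigma+D,\xi}$ when the recursion is run in the backwards-time formulation needed for strong (as opposed to merely forward) coupling; the admissibility condition $Z\le Y_{\sigma+D,\xi}$ is precisely what licenses the dominating monotone comparison at each backward step.
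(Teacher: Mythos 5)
Your proof is correct and follows essentially the same route as the paper: the two-sided pathwise comparison of the workload recursion with the monotone LRMST/LRmST recursions, the events $\left\{Y_{\sigma+D,\xi}\circ\theta^n=0\right\}$ as a stationary sequence of renovating events of length $1$ (of positive probability under (\ref{eq:condsuffGGd})), and Borovkov's theorem for existence, uniqueness and strong backwards coupling. The only, harmless, variation is in establishing $Y_{\sigma\wedge D,\xi}\le W\le Y_{\sigma+D,\xi}$ for an arbitrary finite stationary solution $W$: you invoke the coupling of $\suite{L^W}$ and $\suite{M^W}$ with their stationary versions together with the stationarity of $\pp$ under $\theta$, whereas the paper shows directly that $\left\{W\le Y_{\sigma+D,\xi}\right\}$ and $\left\{Y_{\sigma\wedge D,\xi}\le W\right\}$ are $\theta$-contracting events of positive probability (the seed $\prp{W\le D}>0$ coming from the Ergodic Lemma).
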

\begin{proof}
\emph{Existence}. 
Let us first remark that for any $x\in\R^+$,  
\begin{multline}
\label{eq:ineq}
x+\sigma\car_{x\le D}=(x+\sigma)\car_{x\le D}+x\car_{D<x\le D+\sigma}+x\car_{D+\sigma<x}\\
\le (D+\sigma)\car_{x\le D}+(D+\sigma)\car_{D<x\le D+\sigma}+x\car_{D+\sigma<x}\\=x\vee(D+\sigma),\mbox{ $\pp$-a.s.}.
\end{multline}
Hence for any $x\le y$ , 
$$\left[x+\sigma\car_{x\le D}-\xi\right]^+\le\left[x\vee(D+\sigma)-\xi\right]^+\le \left[y\vee(D+\sigma)-\xi\right]^+,\pp-\mbox{a.s.},$$
and a straightforward induction shows that $Z\le Y_{\sigma+D,\xi}$, $\pp$-a.s. implies that for all $n \ge 0$,  
$W^Z_n\le L^{Y_{\sigma+D,\xi}}_n = Y_{\sigma+D,\xi}\circ\theta^n$, $\pp$-a.s.. 
Therefore, denoting $\mathfrak A_n,$ the event $\left\{Y_{\sigma+D,\xi}\circ\theta^n=0\right\}$, $\suite{\mathfrak A}$ is a sequence of renovating 
events of length 1 for the sequence $\suite{W^Z}$ since $\mathfrak A_n \subseteq \left\{W^Z_n=0\right\}$ for any $n$ (see \cite {BacBre02}, p.115, \cite{MR52:12118}, \cite{Foss92}). Moreover, this sequence is stationary in the sense that for all $n \ge 0$, $\mathfrak A_n=\theta^{-n}\mathfrak A_0,$ where $\mathfrak A_0= \left\{Y_{\sigma+D,\xi}=0\right\}.$ 
Hence since (\ref{eq:condsuffGGd}) amounts to $\prp{\mathfrak A_0}>0$, this is from \cite{BacBre02}, Theorem 2.5.3., a sufficient condition for the existence of a solution $W$ to (\ref{eq:recurstatFIFO}), and for strong backwards coupling to occur for $\suite{W^Z}$ with $W$. 

\textit{ }\\
\emph{Uniqueness}. Let $W$ be a solution of (\ref{eq:recurstatFIFO}). 
Then, we have $\prp{W\le D}>0$. Indeed, if $W > D$, $\pp$-a.s. (which implies in particular that $W\circ\theta>0$, $\pp$-a.s.), then  $W\circ\theta=W-\xi,$ $\pp$-a.s., which is absurd in view of the Ergodic Lemma (\cite{BacBre02}, Lemma 2.2.1). 
On another hand, the inequality (\ref{eq:ineq}) imply that on $\left\{W \le Y_{\sigma+D,\xi}\right\}$, 
\begin{equation*}
W\circ\theta\le \left[W\vee(\sigma+D)-\xi\right]^+\le \left[Y_{\sigma+D,\xi}\vee(\sigma+D)-\xi\right]^+=Y_{\sigma+D,\xi}\circ\theta.
\end{equation*}  
Thus $\left\{W \le Y_{\sigma+D,\xi}\right\}$ is $\theta$-contracting, whereas 
on $\{W\le D\}$, 
$$W\circ\theta=\left[(W+\sigma)\car_{W\le D}-\xi\right]^+\le \left[(D+\sigma)\car_{W\le D}-\xi\right]^+\le Y_{\sigma+D,\xi}\circ\theta,$$
hence $\left\{W \le Y_{\sigma+D,\xi}\right\}$ is $\pp$-almost sure. Therefore,  
$\suite{W^W}=\suiten{W\circ\theta^n}$ admits $\suite{\mathfrak A}$ as a stationary sequence of renovating events of length 1. From \cite{BacBre02}, Remark 2.5.3, $\prp{\mathfrak A_0}>0$ implies the uniqueness property. 
Finally, for any $x\in\R+$, we have $\pp$-a.s. that 
\begin{multline*}
x\vee(D\wedge \sigma)=(D\wedge \sigma)\car_{x\le D\wedge \sigma}+x\car_{x>D\wedge \sigma}\car_{x\le D}+x\car_{x>D}\\
\le (x+\sigma)\car_{x\le D\wedge \sigma}\car_{x\le D}+(x+\sigma)\car_{x>D\wedge \sigma}\car_{x\le D}+x\car_{x>D}=x+\sigma\car_{x\le D}.
\end{multline*}
This implies that on $\left\{Y_{\sigma\wedge D,\xi}\le W\right\}$,
$$Y_{\sigma\wedge D,\xi}\circ\theta=\left[Y_{\sigma\wedge D,\xi}\vee(\sigma\wedge D)-\xi\right]^+\le \left[W\vee(\sigma\wedge D)-\xi\right]^+\le 
W\circ\theta,$$ 
thus $\left\{Y_{\sigma\wedge D,\xi} \le W\right\}$ is $\theta$-contracting. 
It is $\pp$-almost sure since it includes $\left\{Y_{\sigma+D,\xi}=0\right\}$ (in view of the immediate fact that $Y_{\sigma\wedge D,\xi}\le Y_{\sigma+D,\xi}$, $\pp$-a.s.). 
\end{proof}

\subsection{Some applications}
\label{subsubsec:appli}
It is a very classical statement, that due to the FIFO discipline, the construction of the stationary versions of some quantities of interest can be derived from that of the workload sequence. In particular, provided that (\ref{eq:condsuffGGd}) holds, one can construct a congestion process and a departure process that are jointly compatible with the arrival process $\procz{N}$. Let us remark, that under condition (\ref{eq:condsuffGGd}) there exists also a stationary \emph{loss probability}, denoted $\pi(b)$, which is the 
probability that the waiting time proposed to a customer exceeds his initial patience, at equilibrium. This reads 
\begin{equation}
\label{eq:defpi}
\pi(b)=\prp{W>D}.
\end{equation}
With Theorem \ref{thm:borov} in hand, we have in particular that 
\begin{equation}
\label{eq:encadrepiB}
\prp{Y_{\sigma\wedge D,\xi}>D}\le \pi(b) \le \prp{Y_{\sigma+D,\xi}>D}.
\end{equation}

\subsection{Weak stationarity}
\label{subsubsec:anan}
In this section, condition (\ref{eq:condsuffGGd}) is no longer assumed to hold. We show how the techniques developed in \cite{Anan97} may allow us to 
construct a stationary workload for the queue, on an enriched probability space. Again, this is done using the stochastic comparison with the LRMST 
sequence (see (\ref{eq:ineq})).  
Let $\phi\left\{.\right\}$ be the measurable random map from $\left(\R,\B(\R)\right)$ into itself (we denote $\phi \in \mathfrak M(\R)$) defined 
by $$\phi(\omega)\left\{y\right\}=\left[y+\sigma(\omega)\car_{\left\{y\le D(\omega)\right\}}-\xi(\omega)\right]^+.$$
We work on the enlarged probability space $\Omega \times \R$, on which we define the shift 
$$\tilde\theta(\omega,x)=\left(\theta\omega,\phi(\omega)\{x\}\right).$$
We then have the following result. 
\begin{theorem}
\label{thm:stabWB}
The stochastic recursion (\ref{eq:recurstatFIFO}) admits a \emph{weak solution}, 
that is, a $\tilde \theta$-invariant probability $\qp$ on 
$\Omega \times \R$ whose $\Omega$-marginal
is $\pp$. Therefore, on 
$\left(\Omega \times \R\right)$  
there exists a $\R\times \mathfrak M(\R)$-valued r.v. $\left(\tilde W,\tilde \phi\right)$ 
satisfying
$$\tilde W\circ\tilde\theta=\tilde \phi\left\{\tilde W\right\}.$$ 
In particular, $\suiten{\left(\tilde W,\tilde \phi\right)\circ \tilde \theta^n}$ is stationary under $\qp$, and $\suiten{\tilde\phi\circ\tilde\theta^n}$ has the same distribution as $\suiten{\phi\circ\theta^n}$.  
\end{theorem}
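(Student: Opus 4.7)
The plan is to follow the tightness construction of Anantharam and Konstantopoulos. I would start from the sequence $\suite{W^0}$ obtained by iterating the random map $\phi$ from $W_0^0=0$, and for each $n\in\N$ denote by $\mu_n$ the law under $\pp$ of $(\theta^n\omega,W^0_n(\omega))$ on $\Omega\times\R$. Since the recursion reads $W_{n+1}^0(\omega)=\phi(\theta^n\omega)\{W_n^0(\omega)\}$, the definition of $\tilde\theta$ gives $\tilde\theta\mu_n=\mu_{n+1}$, so that the Cesàro averages
\[
\bar\mu_N:=\frac{1}{N}\sum_{n=0}^{N-1}\mu_n
\]
satisfy the telescoping identity $\tilde\theta\bar\mu_N-\bar\mu_N=N^{-1}(\mu_N-\mu_0)$.

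The first step is then to check that $\{\bar\mu_N\}_N$ is relatively compact. Each $\bar\mu_N$ has $\Omega$-marginal equal to $\pp$ (a fixed probability), so it suffices to produce uniform tightness of the $\R$-coordinate. I would reuse the stochastic domination from the proof of \thmref{thm:borov}: by \refeq{eq:ineq} and a straightforward induction, $0\le W_n^0\le L_n^0$, $\pp$-a.s.\ for all $n$. But \lemref{lemma:solrecur} guarantees that $\suiten{L_n^0}$ couples with $\suiten{Y_{\sigma+D,\xi}\circ\theta^n}$, whose one-dimensional marginals all share the fixed distribution of $Y_{\sigma+D,\xi}$ under $\pp$. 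Hence $\suiten{W_n^0}$ is uniformly tight on $\R$, which gives tightness of $\{\bar\mu_N\}_N$ on $\Omega\times\R$. Extracting a weakly convergent subsequence $\bar\mu_{N_k}\to\qp$, the $\Omega$-marginal of $\qp$ remains $\pp$, and the telescoping identity forces $\tilde\theta\qp=\qp$.

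With $\qp$ in hand, the representation in the theorem follows by reading off the coordinates on the enriched space: set $\tilde W(\omega,x):=x$ and $\tilde\phi(\omega,x):=\phi(\omega)$. By the very definition of $\tilde\theta$, the identity $\tilde W\circ\tilde\theta=\tilde\phi\{\tilde W\}$ holds pointwise on $\Omega\times\R$, so that stationarity of $\suiten{(\tilde W,\tilde\phi)\circ\tilde\theta^n}$ under $\qp$ is simply a restatement of $\tilde\theta$-invariance. Moreover, $\tilde\phi$ depends only on the $\Omega$-coordinate and the $\Omega$-marginal of $\qp$ is $\pp$, so $\suiten{\tilde\phi\circ\tilde\theta^n}$ and $\suiten{\phi\circ\theta^n}$ share all finite-dimensional laws.

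The main technical obstacle will be the passage to the weak limit in the invariance identity: the random map $\phi(\omega)$ is discontinuous at $y=D(\omega)$ (with a downward jump of size $\sigma(\omega)$), hence $\tilde\theta$ is not continuous on $\Omega\times\R$, and $\bar\mu_{N_k}\to\qp$ weakly does not \emph{a priori} imply $\tilde\theta\bar\mu_{N_k}\to\tilde\theta\qp$. Following the strategy of \cite{Anan97}, I would circumvent this by a regularization argument: for each fixed $\omega$ the discontinuity set of $\phi(\omega)$ reduces to the single point $\{D(\omega)\}$, so it is enough to verify that the limit measure can be chosen not to charge the graph $\{(\omega,x):x=D(\omega)\}$, which is ensured by smoothing $\bar\mu_N$ with a vanishing amount of continuous noise in the $\R$-coordinate before extracting the limit. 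Once this point is settled, tightness, marginal identification and the coordinate extraction are all routine.
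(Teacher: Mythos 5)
Your overall skeleton matches the paper's: Ces\`aro averages of $(\pp\otimes\delta_0)\circ\tilde\theta^{-n}$, tightness of the $\R$-marginals via the domination $W_n^0\le L_n^0$ from \refeq{eq:ineq} and the weak convergence of $\suite{L^0}$ guaranteed by Loynes/Lemma \ref{lemma:solrecur}, extraction of a weak limit $\qp$ with $\Omega$-marginal $\pp$, and the purely formal coordinate extraction of $(\tilde W,\tilde\phi)$ at the end. All of that is correct and is exactly what the paper does.

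The gap is in the one step you defer: the passage to the limit in the invariance identity across the discontinuity of $\phi(\omega)$ at $x=D(\omega)$. Your proposed fix --- smoothing $\bar\mu_N$ with vanishing continuous noise so that ``the limit measure can be chosen not to charge the graph $\{x=D(\omega)\}$'' --- does not work. First, perturbing $\bar\mu_N$ destroys the telescoping identity $\tilde\theta\bar\mu_N-\bar\mu_N=N^{-1}(\mu_N-\mu_0)$, which is the only source of invariance. Second, a vanishing perturbation does not change the weak limit, so it cannot prevent $\qp$ from charging the graph: if the prelimit workloads accumulate at $D$ from above (which is exactly the dangerous scenario here, since $y\mapsto\phi(\omega)\{y\}$ jumps as $y\downarrow D(\omega)$), no amount of vanishing smoothing removes that mass. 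This is precisely the point where the original argument of \cite{Anan97} was incomplete and where \cite{Anan99} adds the hypothesis (A3): one must show that the Ces\`aro averages of the \emph{prelimit} measures put asymptotically negligible mass on the one-sided neighborhoods $\mathcal V_p=\{(\omega,x):\,D(\omega)<x<D(\omega)+2^{-p}\}$, i.e.\ $\lim_p\liminf_N\frac1N\sum_{i<N}\mu_i(\mathcal V_p)=0$. This is a genuinely model-specific verification, and it is where the paper does its real work: it writes $W_i^0\circ\theta^{-i}=\sum_{j\in A_i}R_j$ as a sum of remaining service times of customers indexed in a window $[1,\tau]$ with $\tau<\infty$ $\pp$-a.s.\ (because $D_{-j}+\sigma_{-j}-\sum_{k\le j}\xi_{-k}\to-\infty$ by the Ergodic Theorem), so that, up to the vanishing term $\prp{\tau>i}$, the probability $\prp{W_i^0\circ\theta^{-i}\in(D,D+2^{-p})}$ is bounded uniformly in $i$ by the probability of a finite union of events of the form $\{\sum_l R_{j_l}-D\in(0,2^{-p})\}$, which decrease to null events as $p\to\infty$. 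Without an argument of this kind your proof does not go through.
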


\begin{proof}
We aim to apply Theorem 1 of \cite{Anan97}, whose corrected version is presented in \cite{Anan99}. Let us check that its hypotheses are met in our case. 
First, the sequence $\suite{L^0}$ is tight since it converges weakly, which implies using (\ref{eq:ineq}) and an immediate induction 
that $\suite{W^0}$ is tight, since for all $\varepsilon>0$, there exists $M_{\varepsilon}$ such that for all 
$n \in \N$, 
$$\prp{W^0_n \le M_{\varepsilon}}\ge\prp{L^0_n \le M_{\varepsilon}}\ge 1-\varepsilon.$$ 
Define now on $\Omega\times\R$ the random variables   
$$\tilde W(\omega,x):=x,\,\,\,\tilde \phi\left(\omega,x\right):=\phi(\omega),$$
and for all $n\in\N$, 
$$\tilde W_n(\omega,x):=\tilde W\left(\tilde \theta^n(\omega,x)\right).$$ 
Remark that for all $n\in\N$, $\mathfrak A \in \mathcal F$ and $\mathfrak B \in \mathcal B(\R)$,
$$\pp\otimes\delta_0\left[\tilde \theta^{-n} \left(\mathfrak A \times \R\right)\right]=\pp\otimes\delta_0\left[\mathfrak A \times \R\right]=\prp{\mathfrak A}$$
and  
\begin{multline*}
\pp\otimes\delta_0\left[\tilde \theta^{-n} \left(\Omega \times \mathfrak B\right)\right]=\pp\otimes\delta_0\left[\tilde \theta^{-n} \left(\tilde W_0^{-1}(\mathfrak B)\right)\right]\\=\pp\otimes\delta_0\left[\tilde  W_n^{-1}(\mathfrak B)\right]=\prp{W^0_n \in \mathfrak B}.
\end{multline*}
Hence, the probability distributions $\suiten{\left(\pp\otimes\delta_0\right)\circ \tilde \theta^{-n}}$ on $\Omega\times\R$ have $\Omega$-marginal $\pp$ and $\R$-marginals the distributions of $\suite{W^0}$, which form a tight sequence.   
The sequence $\suiten{\left(\pp\otimes\delta_0\right)\circ \tilde \theta^{-n}}$ is thus tight. 
On another hand, let us define for all $p \in \N^*$,
\begin{itemize}
\item[(i)] $\mathcal V_p=\left\{(\omega,x) \in \Omega\times\R;\,\,D(\omega)<x<D(\omega)+2^{-p}\right\},$
\item[(ii)] for any  $(\omega,x) \in \Omega\times\R$, $$f_p(\omega,x)=\car_{x\le D(\omega)}+\left(-2^{p}x+1+2^pD(\omega)\right)\car_{(\omega,x) \in \mathcal V_p},$$
\item[(iii)] for any $(\omega,x) \in \Omega\times\R$, $$\tilde \theta_p(\omega,x)=\left(\theta\omega,\left[x+f_p(\omega,x)\sigma(\omega)-\xi(\omega)\right]^+\right).$$
\end{itemize}
It is then easily checked, that for all $p$, $\mathcal V_p$ is an open set, $\tilde \theta=\tilde \theta_p$ outside $\mathcal V_p$, and that $\tilde \theta _p$ is continuous from $\omega\times\R$ into $\R$. 
Let us now fix $n,p \ge 1$. We have  
\begin{equation}
\label{eq:cesaro}
\frac{1}{n}\sum_{i=0}^{n-1} \left(\pp\otimes\delta_0\right)\circ \tilde \theta^{-i}\left(\mathcal V_p\right)=\frac{1}{n}\sum_{i=1}^{n}
 \prp{W^0_{i}\circ\theta^{-i} \in \left(D,D+2^{-p}\right)},
\end{equation}
where in words, $W^0_i\circ\theta^{-i}$ is the workload in the system at time 0 assuming that $C_{-i}$ finds an empty system upon arrival.   
Let for any $i\ge 1$, $A_i(\omega)$ denote the set of indices of the customers present in the system at time $0$ given $C_{-i}$ enters an empty system, and for any $j \in A_i$, $R_j$ be the remaining service time (i.e. service required minus service already completed) of 
$C_j$. Remark that $W^0_i\circ\theta^{-i}=\sum_{j\in A_i} R_j$. Denote as well 
$$\tau(\omega)=\sup\biggl\{j \ge 1; D_{-j}(\omega)+\sigma_{-j}(\omega)-\sum_{k=1}^j\xi_{-j}(\omega)>0\biggl\}.$$ 
Since for any $j\in \Z$, $C_j$ does not remain in the system 
for a time larger than $\sigma_{j}+D_j$, we have that $\bigcup_{i\ge 1}A_i(\omega) \subseteq [1,\tau(\omega)]$, $\pp$-a.s.. 
Hence, for any $i\ge 1$,    
\begin{multline*}
\prp{W^0_{i}\circ\theta^{-i} \in \left(D,D+2^{-p}\right)}=\prp{\sum_{j\in A_i} R_j\in \left(D,D+2^{-p}\right)}\\
\le \prp{\tau>i}+\prp{\left\{\tau \le i\right\}\bigcap\left\{\sum_{j\in A_i\cap [1,\tau]}R_j\in \left(D,D+2^{-p}\right)\right\}}\\
\le \prp{\tau>i}+\prp{\left\{\tau <\infty\right\}\bigcap\Biggl\{\bigcup_{k \le \tau}\bigcup_{j_1\le j_2 \le...\le j_k}\left\{\sum_{l=1}^kR_{j_l} \in \left(D,D+2^{-p}\right)\right\}\Biggl\}}. 
\end{multline*} 
From the Ergodic Theorem, the sequence $\{D_{-j}+\sigma_{-j}-\sum_{k=1}^j\xi_{-j}\}_{j\ge 1}$ tends $\pp$-a.s. to 
$-\infty$, hence $\tau<+\infty$, $\pp$-a.s., so that $\prp{\tau>i}$ tends to zero as $i$ goes to infinity.  
Therefore in view of (\ref{eq:cesaro}), we have that 
\begin{multline*}
\underset{p \rightarrow \infty}{\lim}\underset{n \rightarrow \infty}{\lim\inf}\frac{1}{n}\sum_{i=0}^{n-1} \left(\pp\otimes\delta_0\right)\circ \tilde \theta^{-i}\left(\mathcal V_p\right)\\
\le \underset{p \rightarrow \infty}{\lim}\prp{\left\{\tau <\infty\right\}\bigcap\Biggl\{\bigcup_{k \le \tau}\bigcup_{j_1\le j_2 \le...\le j_k}\left\{\left(\sum_{l=1}^kR_{j_l}-D\right) \in \left(0,2^{-p}\right)\right\}\Biggl\}}\\=0,
\end{multline*}
%
thus the last assumption of Theorem 1 of \cite{Anan97} is verified (see again \cite{Anan99} for the additional assumption (A3), p.272).  
We can therefore apply this result, yielding that there exists a $\tilde \theta$-invariant probability $\qp$ on 
$\Omega \times \R$ whose $\Omega$-marginal
is $\pp$. 
It is now straightforward that
$$\tilde W_n(\omega,x)=\phi\left(\theta^{n-1}\omega\right)\circ\phi\left(\theta^{n-2}\omega\right)\circ...\circ\phi(\omega)\{x\},$$
$$\tilde \phi\circ \tilde \theta^n(\omega,x)=\phi\circ\theta^n(\omega),$$
hence the sequence $\suite{\tilde W}$ satisfies on 
$\Omega \times \R$ the stochastic recursion
\begin{equation*}
\tilde W_{n+1}=\tilde \phi\circ\tilde\theta^n\left\{\tilde W_n\right\},
\end{equation*}
where $\suiten{\tilde W_n,\tilde \phi\circ\tilde\theta^n}$ is stationary under $\qp$. 
\end{proof}

\subsection{The loss system G/G/1/1}
\label{subsubsec:flipo}
The stationary workload of the loss system G/G/1/1 queue (there is no buffer, so that each customer is served if and only if he finds an empty system upon arrival), which is constructed in \cite{BacBre02}, section 2.6, and on an enlarged probability space in \cite{Fli83} and \cite{Anan97} is a particular case of the one constructed here, setting $$D(\omega)=0,\,\pp\mbox{-a.s.}.$$
Taking $D$ to be identically zero in the whole section \ref{sec:FIFOIB} yields the results mentioned above.

\section{Impatience until the end of service}
\label{sec:FIFOIE}   
Let us now consider a G/G/s/s+G(e) queue~: the model is that of the 
previous sections, except that the customers are now assumed to remain 
impatient until the end of their service. Indeed, they leave 
the system, and are eliminated forever, provided that their service is not 
\emph{completed} before their deadline. Keeping the notations and other assumptions 
of the previous section, customer $C_n$ is thus discarded when  
the total time he has to wait in the buffer and spend in the service booth is larger than his 
initial time credit $D_n$. We assume that the customers are unaware of their waiting time and deadline, and consequently wait in the system, and possibly enter service, as long as their deadline is not reached. 
In other words, for any $n\in\Z$ the maximal sojourn time of $C_n$ in the system is given by $D_n$, whereas its minimal sojourn 
time is $\sigma_n\wedge D_n$. 
Then, the LRmST sequence is that of the system with impatience until the beginning of service, whereas the LRMST sequence $\suite{L}$ is driven 
by the recursive equation 
$$L_{n+1}=\left[L_n\vee D_n-\xi_n\right]^+,$$ 
and in view of Lemma \ref{lemma:solrecur}, the unique stationary LRMST reads 
\begin{equation}
\label{eq:defQ}
Y_{D,\xi}=\left[\sup_{j \in \N^*} \left(D_{-j}-\sum_{i=1}^{j}\xi_{-i}\right)\right]^+.
\end{equation}
Similar arguments as those of section \ref{sec:regenerIB} lead to the following result. 
\begin{theorem}
\label{thm:regenerGGe}
The G/G/s/s+G(e) queue is regenerative if  
\begin{equation}
\label{eq:condsuffGGe}
\prp{Y_{D,\xi}=0}>0,
\end{equation}
and only if (\ref{eq:condnecGGd}) holds. 
\end{theorem}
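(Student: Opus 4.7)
The plan is to replicate, step by step, the argument carried out in Section~\ref{sec:regenerIB} for the G/G/s/s+G(b) case, adjusting only the bound on the maximal sojourn time of a customer. In the impatience-until-end-of-service model, $C_n$ is guaranteed to be out of the system by time $T_n + D_n$ (either served before the deadline, or eliminated at the deadline). The \emph{remaining maximal sojourn time} of $C_n$ at time $t\ge T_n$ is therefore $[D_n-(t-T_n)]^+$, while the remaining minimal sojourn time is unchanged, equal to $[\sigma_n\wedge D_n-(t-T_n)]^+$, since either the customer is discarded upon arrival or enters service immediately.

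Following this observation, I would introduce the LRMST and LRmST processes
\[
\mathcal{L}_t:=\max_{n=1}^{N_t}\left[D_n-(t-T_n)\right]^+,\qquad
\mathcal{M}_t:=\max_{n=1}^{N_t}\left[\sigma_n\wedge D_n-(t-T_n)\right]^+,
\]
and verify (exactly as in Section~\ref{sec:regenerIB}) that between arrivals they both decrease at unit rate, while being possibly updated at each arrival time. This gives, for the sampled sequences $L^Y_n=\mathcal{L}_{T_n-}$ and $M^Z_n=\mathcal{M}_{T_n-}$, the recursions
\[
L^Y_{n+1}=\left[\max\left\{L^Y_n,D_n\right\}-\xi_n\right]^+,\qquad
M^Z_{n+1}=\left[\max\left\{M^Z_n,\sigma_n\wedge D_n\right\}-\xi_n\right]^+,
\]
both of the generic form (\ref{eq:recur}).

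I then apply \lemref{lemma:solrecur}: the sequence $\suite{L^Y}$ couples with $\suiten{Y_{D,\xi}\circ\theta^n}$ with $Y_{D,\xi}$ given by (\ref{eq:defQ}), and $\suite{M^Z}$ couples with $\suiten{Y_{\sigma\wedge D,\xi}\circ\theta^n}$. In particular, there exist $\pp$-a.s. infinitely many indices with $L^Y_n=0$ iff $\prp{Y_{D,\xi}=0}>0$, and infinitely many with $M^Z_n=0$ iff $\prp{Y_{\sigma\wedge D,\xi}=0}>0$.

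To conclude, I use the sandwich $\{\mathcal{L}_t=0\}\subseteq\{\mathcal{X}_t=0\}\subseteq\{\mathcal{M}_t=0\}$, valid for every $t\in\R$ since a non-empty system forces at least one customer to still be waiting or in service (so its minimal sojourn clock is positive), and conversely all remaining maximal sojourn clocks being zero entails that every customer ever arrived has already left. The sufficient and necessary conditions then follow immediately from the two coupling statements above. There is no genuine obstacle here: the proof is essentially a \emph{mutatis mutandis} transcription of that of \thmref{thm:regenerGGb}, the sole substitution being $\sigma_n+D_n\leadsto D_n$ in the definition of the LRMST, which is precisely what makes $Y_{\sigma+D,\xi}$ disappear in favor of $Y_{D,\xi}$ in the sufficient condition.
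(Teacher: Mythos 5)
Your proof is correct and follows exactly the route the paper takes: the paper also observes that in the G/G/s/s+G(e) model the maximal sojourn time of $C_n$ is $D_n$ while the minimal one remains $\sigma_n\wedge D_n$, derives the recursion $L_{n+1}=\left[L_n\vee D_n-\xi_n\right]^+$ for the LRMST, invokes \lemref{lemma:solrecur} to get $Y_{D,\xi}$, and concludes by the same sandwich argument as for \thmref{thm:regenerGGb}. The only cosmetic quibble is your justification of the minimal sojourn time ("discarded upon arrival or enters service immediately"), which should rather read that a customer departs no earlier than $T_n+\sigma_n\wedge D_n$ since he leaves either at service completion (at least $\sigma_n$ after arrival) or at his deadline $T_n+D_n$; the formula itself is right.
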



The patience of some 
customer may finish while he is in service. Hence such customers participate to the workload, since some service is provided to them, whereas 
their service is not eventually completed. More precisely, the quantity of work added to the workload $W_n$ at the arrival of customer 
$C_n$ is given by
\[\left\{\begin{array}{ll}
\sigma_n &\mbox{ if $W_n \le (D_n-\sigma_n)^+$},\\
\sigma_n-(W_n+\sigma_n-D_n)=D_n-W_n &\mbox{ if $(D_n-\sigma_n)^+<W_n\le D_n$},\\
0 &\mbox{ if $W_n>D_n$.}
\end{array}\right.\]
This can be reformulate in a compact form, stating that the workload sequence is driven by the recursive equation 
$$W_{n+1}=\left[W_n+\left(\sigma_n-\left(W_n+\sigma_n-D_n\right)^+\right)^+-\xi_n\right]^+.$$ 
Therefore, a stationary workload for this queue is a $\R+$-valued r.v. $S$ that solves the equation
\begin{equation}
\label{eq:recurstatend}
S\circ\theta=\left[S+\left(\sigma-\left(S+\sigma-D\right)^+\right)^+-\xi\right]^+.
\end{equation}
We have the following result. 
\begin{theorem}
\label{thm:borovend}
\emph{(i)} The equation (\ref{eq:recurstatend}) admits a finite solution $S$ that is such that $Y_{\sigma\wedge D,\xi}\le S \le Y_{D,\xi}$.\\ 
\emph{(ii)} Provided that (\ref{eq:condsuffGGe}) holds, this solution 
is unique and for any r.v. $Z$ such that $Z\le Y_{D,\xi}$, $\pp$-a.s., $\suite{W^Z}$ converges with strong backwards coupling to $S$.\\ 
\emph{(iii)} If in addition (\ref{eq:condsuffGGd}) holds, then the unique solution is such that $S\le W$, $\pp$-a.s., where $W$ is the only solution of (\ref{eq:recurstatFIFO}).
\end{theorem}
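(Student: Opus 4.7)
The plan is to exploit the fact that, unlike in the previous section, the recursion \refeq{eq:recurstatend} is driven by a \emph{monotone and continuous} random mapping, so that Loynes' scheme can be combined with the renovating events argument of Theorem \ref{thm:borov}. Introduce the random map
$$h(x):=x+\left(\sigma-(x+\sigma-D)^+\right)^+,$$
so that \refeq{eq:recurstatend} reads $S\circ\theta=[h(S)-\xi]^+$. A case analysis according to whether $x\le D-\sigma$, $D-\sigma<x\le D$, or $x>D$ shows that $h$ equals $x+\sigma$, $D$, and $x$ on these three intervals respectively. In particular $h$ is $\pp$-a.s. continuous and non-decreasing, and it satisfies the two pointwise bounds
$$x\vee(\sigma\wedge D)\;\le\;h(x)\;\le\;x\vee D,\qquad \pp\text{-a.s.}$$

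For (i) and (ii), the continuity and monotonicity of $x\mapsto[h(x)-\xi]^+$ let Loynes' construction run verbatim as in Lemma \ref{lemma:solrecur}: the sequence $\{W^0_n\circ\theta^{-n}\}$ is non-decreasing and its $\pp$-a.s. limit $S$ is a stationary solution of \refeq{eq:recurstatend}. The upper bound on $h$ and a straightforward induction give $W^0_n\le L^0_n$, where $L$ is the LRMST sequence of the end-of-service model, whence $S\le Y_{D,\xi}$; this yields finiteness and the upper bound in (i), and the lower bound $S\ge Y_{\sigma\wedge D,\xi}$ is obtained symmetrically from $h(x)\ge x\vee(\sigma\wedge D)$. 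Under the hypothesis \refeq{eq:condsuffGGe} of (ii), set $\mathfrak A_n:=\{Y_{D,\xi}\circ\theta^n=0\}$. For any $Z\le Y_{D,\xi}$ the monotonicity of the recursion together with the upper bound on $h$ give $W^Z_n\le Y_{D,\xi}\circ\theta^n$ for every $n$, so $\mathfrak A_n\subseteq\{W^Z_n=0\}$ and $\{\mathfrak A_n\}$ is a stationary sequence of renovating events of length $1$. Borovkov's theorem (as invoked in the proof of Theorem \ref{thm:borov}) then delivers uniqueness of $S$ and strong backwards coupling of $\{W^Z_n\}$ with $\{S\circ\theta^n\}$.

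For (iii), observe first that \refeq{eq:condsuffGGd} forces \refeq{eq:condsuffGGe}, since $Y_{D,\xi}\le Y_{\sigma+D,\xi}$ $\pp$-a.s., so $S$ is uniquely determined by (ii) and $W$ by Theorem \ref{thm:borov}, each reached by strong backwards coupling from the zero initial condition. The inequality $S\le W$ then rests on the pointwise comparison
$$h(x)\;\le\;y+\sigma\car_{\{y\le D\}},\qquad\text{whenever }x\le y,$$
proven by splitting on $y\le D$ versus $y>D$ and using the explicit piecewise form of $h$. Starting both recursions from $0$ at time $-n$, an immediate induction based on this inequality gives $S^0_{-n,0}\le W^0_{-n,0}$ for every $n$; strong backwards coupling identifies these pre-limit values with $S$ and $W$ respectively for all sufficiently large $n$, yielding $S\le W$ $\pp$-a.s. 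The one genuinely subtle step is precisely this pointwise comparison: the mapping $y\mapsto y+\sigma\car_{\{y\le D\}}$ is \emph{not} monotone in $y$, so a naive coupling argument fails, and the inequality goes through only because the saturation of $h$ at the plateau value $D$ on $(D-\sigma,D]$ exactly compensates the downward jump at $y=D$. Once this comparison is secured, everything else reduces to the Loynes and Borovkov machinery already set up in Sections \ref{sec:PRELIM} and \ref{subsubsec:borov}.
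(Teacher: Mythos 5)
Your overall strategy coincides with the paper's: Loynes' scheme for existence (the map is monotone and continuous), comparison with $Y_{D,\xi}$ and $Y_{\sigma\wedge D,\xi}$ for the bounds, renovating events $\{Y_{D,\xi}\circ\theta^n=0\}$ for the coupling, and the pointwise inequality $h(x)\le y+\sigma\car_{\{y\le D\}}$ for $x\le y$ (which is exactly the paper's \refeq{eq:ineq2} combined with monotonicity of $h$) for part \emph{(iii)}. The execution differs in two places, both legitimately. For the bounds in \emph{(i)} you run a monotone induction comparing the Loynes sequence of $S$ with the Loynes sequences of the LRMST and LRmST recursions and pass to the limit; the paper instead shows that $\{S\le Y_{D,\xi}\}$ and $\{Y_{\sigma\wedge D,\xi}\le S\}$ are $\theta$-contracting and have positive probability (via the Ergodic Lemma applied to $\prp{S\le D}>0$, resp.\ $\prp{Y_{\sigma\wedge D,\xi}\le \sigma\wedge D}>0$). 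Your route is slightly more economical for the minimal solution. For \emph{(iii)} you compare the two forward recursions started from $0$ and invoke strong backwards coupling on both sides; the paper shows directly that $\{S\le W\}$ is $\theta$-contracting and contains $\{Y_{\sigma+D,\xi}=0\}$. Both are valid.

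The one genuine gap is in the uniqueness claim of \emph{(ii)}. You delegate uniqueness entirely to ``Borovkov's theorem,'' but the uniqueness part of that machinery (Remark 2.5.3 of Baccelli--Br\'emaud, as used in Theorem \ref{thm:borov}) requires that the stationary sequence generated by \emph{any} solution $S'$ of \refeq{eq:recurstatend} admits $\{Y_{D,\xi}\circ\theta^n=0\}$ as renovating events, i.e.\ that $S'\le Y_{D,\xi}$ $\pp$-a.s.\ for an arbitrary solution $S'$ --- not just for the Loynes solution or for sequences started from $Z\le Y_{D,\xi}$, which is all your induction from zero provides. To close this you need the contracting-event argument you skipped in \emph{(i)}: $\{S'\le Y_{D,\xi}\}$ is $\theta$-contracting by $h(x)\le x\vee D$ and monotonicity, and it has positive probability because $\prp{S'\le D}>0$ (otherwise $S'\circ\theta=S'-\xi$ a.s., contradicting the Ergodic Lemma) and on $\{S'\le D\}$ one has $S'\circ\theta\le[D\vee Y_{D,\xi}-\xi]^+=Y_{D,\xi}\circ\theta$. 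The paper handles uniqueness slightly differently (it shows $\{S'\le S\}$ is $\theta$-contracting and contains $\{Y_{D,\xi}=0\}$, then uses minimality of the Loynes solution), but either way this step for arbitrary solutions cannot be omitted.
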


\begin{proof} 
\emph{(i)} The mapping $x\mapsto \left[S+\left(\sigma-\left(S+\sigma-D\right)^+\right)^+-\xi\right]^+$ is $\pp$-a.s. non-decreasing and continuous, 
as easily 
checked. Hence a minimal solution $S$ to (\ref{eq:recurstatend}) can be constructed using Loynes' Theorem. 
Let us now remark that for any $x$, $\pp$-a.s., 
\begin{multline}
\label{eq:ineq2}
x+\left(\sigma-\left(x+\sigma-D\right)^+\right)^+=\left((x+\sigma)\wedge D\right)\car_{x \le D}+x\car_{x > D}\\
\le (x\vee D)\wedge\left(x+\sigma\car_{x\le D}\right).
\end{multline}
This clearly implies that the event $\{S\le Y_{D,\xi}\}$ is $\theta$-contracting. On another hand, $S$ is such that $\prp{S\le D}> 0$, since the contrary would imply that $S\circ\theta=S-\xi$, $\pp$-a.s., a contradiction to the Ergodic Lemma. 
But on $\{S\le D\}$, 
$$S\circ\theta=\left[\left((S+\sigma)\wedge D\right)-\xi\right]^+\le \left[D\vee Y_{D,\xi}-\xi\right]^+=Y_{D,\xi}\circ\theta,$$ 
thus $S \le Y_{D,\xi}$, $\pp$-a.s. 
Now, for any $x$ we also have that $\pp$-a.s., 
\begin{multline}
\label{eq:ineq3}
x\vee(D\wedge \sigma)=(D\wedge \sigma)\car_{x\le D\wedge \sigma}\car_{x\le D}+x\car_{x> D\wedge \sigma}\car_{x\le D}+x\car_{x>D}\\
\le (D\wedge (x+\sigma))\car_{x\le D\wedge \sigma}\car_{x\le D}+(D\wedge (x+\sigma))\car_{x> D\wedge \sigma}\car_{x\le D}+x\car_{x>D}\\=x+\left(\sigma-(x+\sigma-D)^+\right)^+,
\end{multline}
which implies that $\{Y_{\sigma\wedge D,\xi} \le S\}$ is $\theta$-contracting. Assuming that $Y_{\sigma\wedge D,\xi}>\sigma\wedge D$, 
$\pp$-a.s. would again contradict the Ergodic Lemma. Thus $\{Y_{\sigma\wedge D,\xi} \le S\}$ is $\pp$-almost sure since 
on $\{Y_{\sigma\wedge D,\xi}\le \sigma\wedge D\}$, 
$$Y_{\sigma\wedge D,\xi}\circ\theta=\left[\sigma\wedge D-\xi\right]^+\le \left[\left((S+\sigma)\wedge D\right)\car_{S \le D}+S\car_{S > D}-\xi\right]^+=S\circ\theta.$$

\emph{(ii)} 
For any solution $S^{\prime}$ of (\ref{eq:recurstatend}), $\left\{S^{\prime} \le S\right\}$ is $\theta$-contracting. This event is thus  
$\pp$-almost sure whenever (\ref{eq:condsuffGGe}) holds since it is included in $\{Y_{D,\xi}=0\}$. Hence the uniqueness of the solution is 
entailed by the minimality of $S$. On another hand, the inequality (\ref{eq:ineq2}) implies in particular with a simple induction that 
$W_n^Z\le L_n^{Y_{D,\xi}}=Y_{D,\xi}\circ\theta^n$ for all $n\in\N$ whenever $Z\le Y_{D,\xi}$. 
Thus, for all r.v. $Z$ such that $Z\le Y_{D,\xi}$, $\pp$-a.s., $\left\{\{Y_{D,\xi}\circ\theta^n=0\}\right\}_{n\in\N}$ is a 
sequence of renovating events of length 1 for $\suite{W^0}$. The strong backwards coupling property then follows, as in the proof of 
Theorem \ref{thm:borov}.

\emph{(iii)}  
The fact that the mapping $x\mapsto \left[x+\left(\sigma-\left(x+\sigma-D\right)^+\right)^+-\xi\right]^+$ is $\pp$-a.s. non-decreasing 
imply together with (\ref{eq:ineq2}) that on $\left\{S\le W\right\}$, 
$$S\circ\theta \le \left[W+\left(\sigma-\left(W+\sigma-D\right)^+\right)^+-\xi\right]^+\le W\circ\theta.$$ 
Thus $\left\{S\le W\right\}$ is $\theta$-contracting. This event is $\pp$-almost sure whenever (\ref{eq:condsuffGGd}) holds since it includes $\left\{Y_{D+\sigma,\xi}=0\right\}$. 
\end{proof}

\emph{Applications.} 
It is then possible to define for this model a stationary loss probability $\pi(e)$, which is the probability that the 
patience of customer $C_0$ is less than the sum of the stationary workload and his service time, i.e.
$$\pi(e)=\prp{S>D-\sigma}.$$
From \emph{(i)} of Theorem \ref{thm:borovend}, we have that 
$$\prp{Y_{\sigma\wedge D,\xi}>D-\sigma} \le \pi(e) \le \prp{Y_{D,\xi}>D-\sigma}.$$
On another hand, the stationary probability $\hat{\pi}(e)$ that a customer does not reach the server is given by 
$$\hat{\pi}(e)=\prp{S>D}.$$ 
Then in view of (\ref{eq:defpi}) and \emph{(iii)} of Theorem \ref{thm:borovend}, the loss probability $\pi(b)$ of G/G/1/1+G(b) is larger than $\hat{\pi}(e)$ for the same parameters.


\end{document}